\newtheorem{theorem}{Theorem}
\newaliascnt{lemma}{theorem}
\newtheorem{lemma}[lemma]{Lemma}
\newaliascnt{cor}{theorem}
\newtheorem{corollary}[cor]{Corollary}
\newaliascnt{def}{theorem}
\newtheorem{definition}[def]{Definition}
\newcommand{\given}{\mbox{ }\vert\mbox{ }}
\newcommand{\E}{\mathbb{E}}
\newcommand{\R}{\mathbb{R}}
\newcommand{\norm}[1]{\left\lVert#1\right\rVert}
\newcommand{\email}[1]{\href{mailto:#1}{#1}}
\newcommand{\nik}[2]{\mathcal{N}_p(#1,#2)}
\newcommand{\nikol}{\nik{\beta}{C}}
\newcommand{\nikola}[1]{\mathcal{N}_#1(\beta,C)}
\newcommand{\x}{\mathbf{x}}
\renewcommand{\u}{\mathbf{u}}
\renewcommand{\j}{\mathbf{j}}
\renewcommand{\hat}{\widehat}
\title{Minimax density estimation for growing dimension}
\author{Daniel J. McDonald\\Department of Statistics\\ Indiana University\\
  Bloomington, IN, 47405\\\email{dajmcdon@indiana.edu}}
\date{Version: \today}
\begin{document}

\maketitle

\begin{abstract}
 This paper presents minimax rates
 for density estimation when the data dimension $d$ is allowed to grow
 with the number of observations $n$ rather
 than remaining fixed as in previous analyses. We prove a
 non-asymptotic lower bound
 which gives the worst-case rate over standard classes of smooth densities,
 and we show that kernel density estimators achieve this rate. We also
 give oracle choices for the bandwidth and derive the fastest rate
 $d$ can grow with $n$ to maintain estimation consistency.
\end{abstract}

\section{Introduction}
\label{sec:introduction}

A convincing argument for the use of sparsity or other structural
priors in machine learning and statistics often begins with a
discussion of the ``curse of dimensionality''
\citep[e.g.][]{donoho2000high}. Unmistakable evidence 
of this curse is simply demonstrated in the fundamental scenario of
non-parametric density
estimation: the best estimator has squared $L^2$ error on the order of
$n^{-4/(4+d)}$ given $n$ independent observations in $d$
dimensions, a striking contrast with the parametric rate
$d/n$. If $d$ is even moderately large (but fixed), accurate
estimation requires significantly more data than if $d$ were small. In
fact, we will show that if $d$ is allowed to increase with $n$,
estimation accuracy
degrades even more quickly than the non-parametric rate above indicates. 

At first, it may seem that allowing $d$ to grow with $n$ is a rather
strange scenario, but the use of ``triangular array''
asymptotics is exceedingly common in the theory of high-dimensional
estimation. Theoretical results for the lasso, beginning at least with
\citep{GreenshteinRitov2004}, regularly adopt this framework allowing
the number of predictors to grow with
$n$. \citet{BuhlmannGeer2011} introduce the idea at the very beginning
of their foundational text, and it has been widely adopted in the
literature on regularized linear
models~\citep[e.g.][]{BelloniChernozhukov2011,BickelRitov2009,Meinshausen2007,YeZhang2010,NardiRinaldo2008a}. Under
this framework, the marginal 
distribution of the predictors has support whose dimension
is increasing with $n$. In the scenario of high-dimensional
regression, the dimension can increase very quickly (often on the
order of $d=o(n^\alpha),\ \alpha>1$) as long as most of these
dimensions are irrelevant for predicting the response. The extension
of these results for linear models to the non-linear scenario has been
studied mainly in the case of generalized (sparse) additive
models~\citep{RavikumarLafferty2009,RavikumarLiu2008,YuanZhou2015} which allow for
predictor specific non-linearities as long as the final predictions
are merely 
additive across dimensions. Fully nonparametric regression without the
additivity assumption has been completely ignored outside of the
fixed-$d$ framework, although it is a natural extension of the work
presented here.

Another motivation for appropriating the triangular array framework in
non-parametric density estimation is the burgeoning literature on
manifold
estimation~\citep{TalwalkarKumar2008,GenovesePerone-Pacifico2012,GenovesePerone-Pacifico2012a}. Given
high-dimensional data, a natural assumption is that the data is
supported on a low-dimensional manifold embedded in the
high-dimensional space. While estimating the manifold
is possible, we may also wish to estimate a density or a regression
function supported on the manifold. Recent work has focused on 
density estimation when the dimension of the manifold is fixed and
known~\citep{Asta2013,Hendriks1990,Pelletier2005,BhattacharyaDunson2010},
but the extension of such results to manifolds of growing dimension is
missing. Such an extension presumes
that the minimax framework we present can be extended to
manifolds. As pointed out by a reviewer, the short answer is yes. The
lower bound we derive applies immediately. The only
modification we need relates to our upper bound: the kernel should depend on the metric
given by the manifold rather than Euclidean distance as we use
here. 

A specific application of our setting would be from fMRI
data. Given a sequence of 3D resting-state fMRI scans from a
patient, researchers seek to estimate the dependence between
cubic centimeter voxels~\citep[e.g.][]{BullmoreSporns2009}. Each scan can contain on the
order of 30,000 voxels, while the number of scans for one individual
is smaller. It is too much to estimate the dependence between all
voxels, so the data are averaged into a small number ($\sim$20--200) of
regions. To estimate the dependence, standard methods assume
everything is multivariate Gaussian and estimate the covariance or
precision matrix. But the Gaussian assumption cannot be tested
without density estimates. Using our results, we could estimate
smooth densities. As the number of scans grows, we would want to
increase the number of regions. Our work illustrates how quickly the
number of regions can grow.

The remainder of this section introduces the statistical minimax
framework, discusses the specific data generating model we examine and
details notation,
presents some background on the estimator we use which achieves the
minimax rate, and gives a short overview of related literature. In
\autoref{sec:main-results}, we give our main results and discuss their
implications, specifically obtaining the fastest rate at which $d$ can
grow with $n$ to yield estimation
consistency. \autoref{sec:lower-bounds} gives the proof of our lower
bound over all possible estimators while the proof of the matching
upper bound for the kernel density estimator is given in
\autoref{sec:upper-bounds}. Finally, we discuss these results in
\autoref{sec:discussion}, provide some related results for other loss
functions, and suggest avenues for future research.

\subsection{The Minimax Framework}
\label{sec:minimax-framework}

In order to evaluate the feasibility of density estimation under the
triangular array, we use the statistical minimax framework. In our
situation, this
framework begins with a specific class of possible densities we are
willing to consider and
provides a lower bound on the performance of the best possible estimator
over this class. With this bound in hand, we have now quantified the
difficulty of the problem. If we can then find an estimator
which achieves this bound (possibly up to constants), then we can be
confident that this estimator performs nearly as well as possible for
the given class of densities. 
Thus, the minimax framework
reveals gaps between proposed estimators and the limits of possible
inference. Of course if the bounds fail to match, then we won't know
whether they are too loose, or the estimator is poor.

\subsection{Model and Notation}
\label{sec:model-notation}

We specify the following setting for density estimation in a
triangular array. Suppose for each $n\geq 1$, $X^{(n)}_i \in \R^{d(n)}$, $i=1,\ldots,n$
are independent with common density $f^{(n)}$ in some class which we
define below. For notational convenience, we
will generally suppress the dependence on $(n)$. To be clear,
in specifying this 
model, we do not assume a relationship for some sequence of densities
$\{f^{(n)}\}_{n=1}^\infty$, but rather we seek to understand the
limits of estimation when there is a correspondence between $d(n)$ and
$n$. Thus, we seek non-asymptotic results which characterize this
behavior. We will also employ the following notation: given
vectors $s,x\in\R^d$, let $|s|=\sum_i s_i$, $s! = \prod_i s_i!$ and
$x^s = x_1^{s_1}\cdots x_d^{s_d}$. Then define
\[
D^s = \frac{\partial^{|s|}}{\partial x_1^{s_1} \cdots \partial x_d^{s_d}}.
\]
Let $\lfloor \beta \rfloor$ denote the largest integer strictly less than
$\beta$. Throughout, we will use $a$ and $A$ for positive constants
whose values may change depending on the context.

Even were $d$ fixed at 1, it is clear that density estimation is
impossible were we to
allow $f$ to be arbitrary.\footnote{In the sense that, an adversary
  can choose a density and give us a finite amount of data on which
  our estimators will perform arbitrarily poorly.} For this reason, we will restrict the class
of densities we are willing to allow.
\begin{definition}
  [Nikol'skii class]
  \label{def:nikolskii}
  Let $p\in [2,\infty)$. The {\em isotropic Nikol'skii class} $\nikol$
  is the set of functions $f: \R^d \rightarrow \R$  such that:\\
  \ \ (i)\ $f \geq 0$ a.e.\\
  \ \ (ii)\  $\int f = 1$.\\
  \ \ (iii)\  partial derivatives $D^s f$
    exist whenever $|s|\leq\lfloor \beta \rfloor$\\
  \ \ (iv)\ 
    $
    \left[ \int \left( D^s f(x+t) - D^s f(x) \right)^p dx \right]^{1/p}
    \leq C \norm{t}_1^{\beta-|s|}, 
    $
    for all $t \in \R^d$.
\end{definition}
This definition essentially characterizes the smoothness of the
densities in a natural way. It can be shown easily that the Nikol'skii
class generalizes Sobolev and H\"older classes under similar
conditions~\citep[see e.g.][p.~13]{Tsybakov2009}.

\subsection{Parzen-Rosenblatt Kernel Estimator}
\label{sec:parz-rosenbl-kern}

Given a sample $X_1,\ldots,X_n$, the Parzen-Rosenblatt kernel density
estimator on $\R^d$ at a point $x$ is given by  
\[
\hat{f}_h(x) = \frac{1}{nh^d}\sum_{i=1}^n K\left(\frac{x-X_i}{h}\right).
\]
We will consider only certain functions $K$.
\begin{definition}\label{def:kernel}
  We say that $K: \R^d \rightarrow \R$ is an {\em isotropic kernel of order
    $\beta$} if $K(u) = G(u_1)G(u_2)\cdots G(u_d)$ for $G:
  \R\rightarrow \R$ satisfying
  $
  \int G = 1$, $\int |u|^\beta |G(u)|du<\infty$, and $\int u^j G(u) du = 0, 
  $
for  $0< j\leq\lfloor \beta\rfloor$.
\end{definition}
For the standard case $\beta=2$, the Epanechnikov kernel $G(u) =
0.75(1-u^2)I(|u|\leq 1)$ satisfies these conditions and is often the
default in software. The Gaussian kernel, $G(u)=(2\pi)^{-1/2}e^{-u^2/2}$, is also a member of this
class. For $\beta>2$, the kernel must take negative
values, possibly resulting in negative density
estimates, although, using the positive-part estimator eliminates this
pathology without affecting the results. Kernels for such $\beta$ can
be constructed using an orthonormal basis~\citep[see][p.~11]{Tsybakov2009}.

The intuition for this estimator is that it can be seen as a smooth
generalization of the histogram density estimator which uses local
information rather than fixed bins. Thus, if we believe the density is smooth, using
such a smoothed out version is natural. Another way to see this is to
observe that the kernel estimator is the
convolution of $K$ with the empirical density function $f_n$, defined
implicitly via $\int_{-\infty}^x f_n(y) dy = F_n(x)=\frac{1}{n}\sum
I(x_i \leq x)$. Using the empirical density itself is an unbiased
estimator of the true density (and it satisfies the central limit
theorem for fixed $d$), but by adding bias through the kernel, we may
be able to reduce variance, and achieve lower estimation risk for
densities which ``match'' the kernel in a certain way.

In this work, we have chosen, for simplicity, to use isotropic kernels
and the isotropic Nikol'skii class of densities. Basically, densities
$f\in\nikol$ have the same degree of smoothness in all directions. The
same is true of the kernels which satisfy
\autoref{def:kernel}. Allowing anisotropic smoothness is a natural
extension, although the notation becomes complicated very
quickly. For the anisotropic case under fixed-$d$ asymptotics, see for
example~\citet{GoldenshlugerLepski2011}.

\subsection{Related Work}
\label{sec:related-work}

Density estimation in the minimax framework is a well-studied problem
with many meaningful contributions over the last six decades, and we do
not pretend to give a complete overview 
here. Recent advances tend to build on one of four frameworks: (1) the support of $f$, (2) the
smoothness of $f$, (3) whether the loss is adapted to the nature of the
smoothness, and (4) whether the estimator can adapt to different
degrees of
smoothness. For a comprehensive overview of these and other concerns,
an excellent resource is \citet{Goldenshluger2014} which
presents results for adaptive estimators over classes of varying
smoothness when the loss is not necessarily adapted to the
smoothness. It also contextualizes and compares existing work. For previous results most similar to those we present in
terms of function classes and losses,
see~\citet{HasminskiiIbragimov1990}. Other important work is given
in~\citet{GoldenshlugerLepski2011,DevroyeGyorfi1985,Gerard-Kerkyacharian1996,JuditskyLambert-Lacroix2004}. 

Unlike in the density estimation setting, there are some related
results in the information theory literature
which endeavor to address the
limits of estimation under the triangular array. Essentially, this work
examines the estimation of the joint distribution of a $d$-block of random
variables observed in sequence from an ergodic process
supported on a finite set of points. \citet{MartonShields1994} show
that if $d$ grows like $\log n$, then these joint distributions can be
estimated consistently. An extension of these results to the case 
of a Markov random field embedded in a higher dimension is given by
\cite{Steif1997}. Our results are slightly slower than these (see
\autoref{cor:d-rate}), but estimating continuous densities
rather than finitely supported distributions is more difficult.

\section{Main results}
\label{sec:main-results}

Our main results give non-asymptotic rates for density estimation
under growing dimension. It generalizes existing results in that, had
$d$ been fixed, we recover the usual rate. Deriving the minimax rate
for density estimation requires two components: (1) finding the risk
of the best possible estimator for the hardest density in our class
and (2) exhibiting an estimator which achieves this risk. Our results
are only \emph{rate minimax} in that the upper and lower bounds match
in $d$ and $n$, but constants may be different.

We first present the lower bound. Our proof is given in \autoref{sec:lower-bounds}.
\begin{theorem}[Lower bound for density estimation]
  \label{thm:lowerBound}
  For any $d \in \mathbb{Z}^+$, $\beta>1$, $p \in [2,\infty)$, choose $n > n^*$ with 
  \begin{align*}
    &n^* =  64\norm{\Gamma_0}_2^{-2d}
    \left[\norm{\Gamma_0}_p^{-(d+1)(2\beta+d)}
      C^{4\beta+d}\left(\frac{\sigma}{\varphi(1/\sigma)} \right)^{d(d+\beta)}
      \right]^{1/\beta}.
  \end{align*}
  Then,
  \begin{align*}
    &\inf_{\hat{f}} \sup_{f \in \nik{\beta}{C}} \E_f
      \left[\left(\frac{n^\beta} {d^d}
      \right)^{\frac{1}{2\beta+d}} \norm{f-\hat{f}}_p \right]
    \geq
      c\left(\frac{1}{8}\right)\frac{C}{2}\frac{\kappa^{-\beta}}{8^{1/p}},
  \end{align*}
  for $c(v)$ a function only of $v$ and $\kappa:=\frac
  {\varphi(1/\sigma)} {\sigma\norm{\Gamma_0}_2^2}$. The infimum is
  over all estimators $\hat f$.
\end{theorem}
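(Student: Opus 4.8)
The plan is to follow the classical Fano/Assouad-type recipe for minimax lower bounds in density estimation (as in Tsybakov, Chapter 2), but carefully tracking how every constant depends on $d$. First I would construct a finite family of densities $\{f_0, f_1, \ldots, f_M\}\subset\nikol$ indexed by a hypercube $\{0,1\}^m$. The base density $f_0$ is taken to be a fixed smooth density on $\R^d$ (a product of one-dimensional smooth densities, so that $\varphi$ and $\sigma$ in the statement refer to a Gaussian-like reference density — $\varphi$ its density, $\sigma$ a scale parameter), and the perturbations are built from a single bump function $\Gamma_0:\R^d\to\R$ that is supported on the unit cube, has the required number of vanishing derivatives, and is rescaled: $f_\theta(x) = f_0(x) + \sum_{k} \theta_k\, \delta\, h^{\beta}\, \Gamma_0\!\big((x - x_k)/h\big)$, where the centers $x_k$ tile a region on which $f_0$ is bounded below, $h$ is a bandwidth to be optimized, $m \asymp h^{-d}$ is the number of bumps, and $\delta$ is chosen so each $f_\theta$ stays nonnegative and the Nikol'skii seminorm stays below $C$. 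The factors $\norm{\Gamma_0}_2$ and $\norm{\Gamma_0}_p$ enter precisely here: the $L^p$ norm controls membership in $\nikol$ (condition (iv)) and hence the admissible size of $\delta$, while the $L^2$ norm controls the pairwise KL/chi-square distances that feed the information-theoretic step.

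Next I would lower-bound the $L^p$-separation of the family: for $\theta\ne\theta'$ differing in a single coordinate, $\norm{f_\theta - f_{\theta'}}_p = \delta h^{\beta} h^{d/p}\norm{\Gamma_0}_p \asymp C h^{\beta}\norm{\Gamma_0}_p / \norm{\Gamma_0}_?^{\cdots}$ after substituting the largest allowed $\delta$; more relevantly, by a Varshamov–Gilbert argument one extracts a subfamily of size $2^{cm}$ whose elements are pairwise separated by a constant multiple of that quantity. Then I would bound the Kullback–Leibler divergence $\mathrm{KL}(f_\theta^{\otimes n}\,\|\,f_0^{\otimes n}) = n\,\mathrm{KL}(f_\theta\|f_0) \lesssim n\, m\, \delta^2 h^{2\beta} h^d \norm{\Gamma_0}_2^2 / \inf f_0$, using $\mathrm{KL}\le \chi^2$ and the lower bound on $f_0$ (this is where $\sigma/\varphi(1/\sigma)$ appears, as the inverse of the floor of $f_0$ on the tiling region, raised to a power that grows with $d$ because tiling a $d$-dimensional region where a product density is bounded below forces the floor to shrink geometrically in $d$). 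Feeding both into Fano's inequality (Theorem 2.5 in Tsybakov), I need $\mathrm{KL} \le \alpha\, cm$ for the bound to be nontrivial; solving this for $h$ gives the optimal bandwidth $h \asymp (d^d/n^\beta)^{1/(2\beta+d)}$ — note the $d^d$ comes from the $d$-dependent constants collected above — and substituting back into the separation yields the rate $\norm{f-\hat f}_p \gtrsim (n^\beta/d^d)^{-1/(2\beta+d)}$ times the explicit constant $c(1/8)\tfrac{C}{2}\kappa^{-\beta}8^{-1/p}$. The condition $n > n^*$ is exactly the requirement that $h<1$ (so bumps fit in the unit-scale support) and that $m\ge 1$; unwinding those inequalities with all the $d$-dependent constants produces the stated formula for $n^*$.

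The main obstacle — and the novel part relative to the fixed-$d$ literature — is the bookkeeping of the dimension dependence: every constant that is harmlessly absorbed in the classical proof ($\inf f_0$, $\norm{\Gamma_0}_p/\norm{\Gamma_0}_2$, the Varshamov–Gilbert constant, the packing number of the tiling) must be made explicit and its growth in $d$ tracked, since these are what turn the familiar exponent $n^{-\beta/(2\beta+d)}$ into $(n^\beta/d^d)^{-1/(2\beta+d)}$ and what determine $n^*$. In particular, getting the $d^d$ factor (rather than, say, $d^{cd}$ for some other $c$) right requires choosing the reference density $f_0$ and the number/placement of bumps optimally — too many bumps makes the KL term blow up, too few weakens the separation — and verifying that a product-form $f_0$ with Gaussian-type tails has its floor on an $O(1)$-radius region decaying like $(\varphi(1/\sigma)/\sigma)^{d}$, whose $\beta/(2\beta+d)$ power is what survives. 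I would also need to double-check that the single bump $\Gamma_0$, rescaled by $h$, keeps its Nikol'skii seminorm controlled with a constant independent of $d$ (true because the $\norm{\cdot}_1$-homogeneity in condition (iv) interacts cleanly with isotropic rescaling), so that $d$-dependence enters only through the counting and the density floor, not through the smoothness constant itself.
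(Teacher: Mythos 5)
Your overall strategy is the same one the paper uses: reduce to a multiple-hypothesis test \`a la Tsybakov, take $f_0$ a product of $d$ scaled Gaussians, build a hypercube of perturbations from a compactly supported bump, separate a subfamily with Varshamov--Gilbert, bound the $n$-sample KL divergence via the $\chi^2$ upper bound and the floor of $f_0$ on the perturbation region, then tune the bump scale so the KL constraint matches the number of bumps. That scaffolding is right, and your rate accounting (separation $\asymp \delta h^\beta$, number of bumps $\asymp h^{-d}$, KL $\asymp n\,\delta^2 h^{2\beta}(\inf f_0)^{-1}(\text{const})^d$) is the correct recipe.

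The genuine gap is in your account of where the factor $d^d$ comes from. You attribute it to the floor of the product density decaying like $(\varphi(1/\sigma)/\sigma)^d$; in fact that factor is exactly what gets absorbed into the constant $\kappa$ (via $\kappa_d\to\kappa$) and contributes nothing to the rate. If you set up the construction exactly as you describe, with $\delta$ ``the largest allowed by the Nikol'skii constraint'' in a $d$-independent way, the KL bound forces $m\lesssim n^{1/(2\beta+d)}\cdot(\text{const})^{d/(2\beta+d)}$ and the separation is $\asymp m^{-\beta}\cdot(\text{const})^d$, so you only recover the usual $n^{-\beta/(2\beta+d)}$ rate with a constant that actually degenerates as $d\to\infty$ (since $\norm{\Gamma_0}_p<1$ makes $\norm{\Gamma_0}_p^d\to 0$). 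The paper's essential move is to take the amplitude of the $d$-dimensional bump to be $dC\prod_{i}\Gamma_0(u_i)$ --- i.e.\ $\delta\propto dC$, not $\delta\propto C$ --- and then verify that the smoothness condition still holds because the Nikol'skii modulus of the product bump carries a compensating $2^{-d}$ (each 1-D factor contributes a modulus $\le 1/2$). The extra factor of $d$ in the separation and $d^2$ in the KL bound are precisely what, after balancing, produce $d^{d/(2\beta+d)}n^{-\beta/(2\beta+d)}$. Without explicitly enlarging the perturbation amplitude by $d$ (and checking admissibility via the $2^{-d}$ cancellation), the proposal does not produce the claimed lower bound.

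A second, smaller slip: the optimizing ``bandwidth'' is $h=1/m\asymp(d^2 n)^{-1/(2\beta+d)}$, not $(d^d/n^\beta)^{1/(2\beta+d)}$; the latter is the resulting \emph{rate}, not the tuning parameter. This is likely just a transcription error but worth flagging since conflating the two obscures why the KL constraint gives the $d^2$ rather than $d^d$ in the optimal $m$.
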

This result says that there exists a triangular array $\{f^{(n)}\}$ of
densities in $\nikol$ so that the best risk we can hope to achieve over all possible
estimators $\hat{f}$ is 
\[
\E_f \left[\norm{f-\hat{f}}_p \right] =O\left(\left(\frac{d^d}{n^\beta}\right)^{\frac{1}{2\beta+d}}\right).
\] 
The specific constant $\kappa$ as well as the minimum $n^*$ are properties of the proof technique, so
their forms are not really relevant (except that $\kappa$ is independent of $n$
and $d$). Specifically, $\varphi(u)=(2\pi)^{-1/2}e^{-u^2/2}$ is the
standard normal density, $\sigma>0$ is the standard deviation to be
chosen,  and $\Gamma_0$ is a small perturbation we make
explicit below. One could make other choices for the ``worst case''
density which result in different values. We also note that here $C$
is the same constant in each equation (and in the remainder of the
paper): it quantifies the smoothness of the class $\nikol$.

Our second result shows that, for an oracle choice of the bandwidth
$h$, kernel density estimators can achieve this rate. That is, for
\emph{any} density in $\nikol$, the risk of the kernel density
estimator is optimal. The proof is given in \autoref{sec:upper-bounds}.

\begin{theorem}[Upper bound for kernels]\label{thm:upperBound}
  Let $f\in\nikol$. Let $K(u)$ be an isotropic kernel of order
  $\ell=\lfloor \beta \rfloor$ which satisfies
  $
  \int K^2(u)du <\infty.
  $
  Take  $d\in \mathbb{Z}^+$, $p\in[2,\infty)$. Finally, take $h=A
  (d^2n)^{-1/(2\beta+d)}$ for some constant $A$. Then, for $n$ large enough,
  \begin{align*}
    \sup_{f \in \nikola{p}} \E_f \left[ \norm{\hat{f}_h(x) - f(x)}_p
    \right] = O\left(\left(\frac{d^d}{n^\beta}\right)^{\frac{1}{2\beta+d}}\right).
  \end{align*}
\end{theorem}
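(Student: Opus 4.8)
The plan is to bound the $L^p$ risk via the standard bias–variance decomposition for kernel density estimators, being careful to track the dependence on $d$ through the constants that are usually suppressed when $d$ is fixed. Writing $\E_f\norm{\hat f_h - f}_p \leq \E_f\norm{\hat f_h - \E_f \hat f_h}_p + \norm{\E_f\hat f_h - f}_p$, I would handle the second (bias) term first. Since $\E_f\hat f_h(x) = (K_h * f)(x)$, a Taylor expansion of $f$ to order $\ell = \lfloor\beta\rfloor$ together with the vanishing-moment conditions in \autoref{def:kernel} and the Nikol'skii smoothness condition (iv) gives a pointwise-then-integrated bound of the form $\norm{\E_f\hat f_h - f}_p \leq A\, C\, m_d\, h^\beta$, where $m_d$ collects the kernel moment $\int\norm{u}_1^\beta |K(u)|\,du$. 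The key point is to estimate how $m_d$ grows with $d$: because $K$ is a product kernel, $\norm{u}_1^\beta = (\sum_i |u_i|)^\beta$ expands (for $\beta$ possibly non-integer, via the multinomial/Hölder bound) into a sum whose size is controlled by a factor polynomial in $d$ — roughly $d^{\beta}$ — times a product of one-dimensional moments of $G$. So the bias is $O(d^\beta h^\beta)$ up to constants depending only on $\beta$, $p$, $C$, and $G$.

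Next I would bound the variance (stochastic) term. Using Rosenthal's inequality or simply Jensen to pass from $L^p$ to $L^2$ (legitimate since we will integrate over the effective support, which I address below), $\E_f\norm{\hat f_h - \E_f\hat f_h}_p$ is controlled by $\bigl(\E_f\norm{\hat f_h - \E_f\hat f_h}_p^p\bigr)^{1/p}$, and a pointwise variance computation gives $\mathrm{Var}_f(\hat f_h(x)) \leq \frac{1}{nh^d}\int K^2(u) f(x-hu)\,du \leq \frac{\norm{f}_\infty}{nh^d}\int K^2$. Since $K$ is a product kernel, $\int K^2(u)\,du = (\int G^2)^d$, which is the crucial $d$-dependent factor: it is exponential in $d$, of the form $R^d$ for $R = \int G^2$. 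Integrating the pointwise bound over $x$ (restricting to a region of volume bounded independent of, or mildly growing with, $d$, using that $f\in\nikol$ has controlled tails — this is a technical point that may need a compact-support reduction or a tail argument) yields $\E_f\norm{\hat f_h - \E_f\hat f_h}_p = O\bigl((nh^d)^{-1/2} R^{d/2}\bigr)$ up to constants.

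Combining the two pieces, the risk is $O\!\left(d^\beta h^\beta + (R^d/(nh^d))^{1/2}\right)$. Plugging in the prescribed bandwidth $h = A(d^2 n)^{-1/(2\beta+d)}$ and optimizing the constant $A$ (which is where the $R$ and $d^\beta$ factors get absorbed) balances the two terms: the bias becomes of order $d^\beta (d^2 n)^{-\beta/(2\beta+d)} = (d^{2\beta+d}/n^\beta \cdot d^{-d})^{1/(2\beta+d)} \cdot d^{\text{(lower order)}}$, and one checks that $d^{2\beta} \cdot d^{-d} \cdot d^{\,d}$ bookkeeping collapses to the claimed $(d^d/n^\beta)^{1/(2\beta+d)}$; the variance term, with the $R^d$ absorbed into the choice of $A$ (note $R^d$ raised to the $1/(2\beta+d)$ power and the $h^{-d}$ together produce exactly $d^{2d/(2\beta+d)} \cdot (\text{const})^{d/(2\beta+d)} n^{-\beta/(2\beta+d)}$, matching after the $d^2$ inside $h$ is accounted for). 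The precise algebra of why $d^2$ inside the bandwidth is the right power is the bookkeeping heart of the proof.

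The main obstacle I anticipate is twofold. First, correctly propagating the $d$-dependence through the constants that classical fixed-$d$ proofs discard: both the kernel $L^2$ norm $(\int G^2)^d$ in the variance and the $\ell_1$-norm moment $\int\norm{u}_1^\beta|K(u)|\,du$ in the bias grow with $d$ (exponentially and polynomially respectively), and these must be tracked to see that the bandwidth needs the extra $d^2$ factor and that the final rate picks up the $d^d$ instead of a constant. Second, the $L^p$-to-pointwise passage requires either a Rosenthal-type moment bound with $d$-uniform constants or a reduction to densities of (effectively) bounded support so that integrating the pointwise risk bound over $\R^d$ does not introduce an uncontrolled volume factor — I would handle this by noting the lower bound's worst-case densities are compactly supported, so it suffices (for a matching upper bound) to prove the upper bound assuming $f$ is supported in a fixed cube, or alternatively to invoke the tail control implied by membership in $\nikol$ together with $p\geq 2$.
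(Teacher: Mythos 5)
Your high-level plan matches the paper's: split $\E\norm{\hat f_h-f}_p$ by the triangle inequality into a bias term and a stochastic term, bound each, then balance with the prescribed bandwidth. The paper packages the two halves as \autoref{lem:bias} and \autoref{lem:var}; its bias lemma uses Minkowski's integral inequality twice plus the Nikol'skii condition, which is the same content as your ``pointwise-then-integrated'' argument.

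The concrete gap is in your bias exponent. You claim the $L^p$ bias is $O(d^\beta h^\beta)$, whereas \autoref{lem:bias} asserts $O(d\,h^\beta)$, reducing the kernel moment to $\sum_{i=1}^d\int|u_i|^\beta|G(u_i)|\,du_i = O(d)$. The discrepancy is fatal, not cosmetic: plugging $h=A(d^2n)^{-1/(2\beta+d)}$ into $d^\beta h^\beta$ gives a $d$-exponent of
\[
\beta-\frac{2\beta}{2\beta+d}=(\beta-1)+\frac{d}{2\beta+d},
\]
that is, $d^{\beta-1}\,(d^d/n^\beta)^{1/(2\beta+d)}$, which exceeds the claimed rate by a factor $d^{\beta-1}$ that grows with $d$ for every $\beta>1$. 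Your sentence ``one checks that the bookkeeping collapses to the claimed $(d^d/n^\beta)^{1/(2\beta+d)}$'' is precisely where the argument fails: it does not collapse. If the bias really is $d^\beta h^\beta$, the balancing bandwidth would have to be $(d^{2\beta}n)^{-1/(2\beta+d)}$ and the final rate would be $(d^{\beta d}/n^\beta)^{1/(2\beta+d)}$, not the one in \autoref{thm:upperBound}. So you must either establish the $O(d)$ moment bound the paper's lemma uses, or recognize that your estimate leads to a different statement.

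On the variance side, no compact-support reduction or tail argument is needed, and the one you propose is invalid. The paper's \autoref{lem:var} uses Fubini: $\int\!\int f(z)K^2((z-x)/h)\,dz\,dx=\int f(z)\!\int K^2((z-x)/h)\,dx\,dz=h^d\int K^2$, so the ``volume of the support'' factor you worry about never appears, for any density $f$. Tracking $\int K^2=(\int G^2)^d$ is a reasonable instinct, but for the kernels the theorem admits (Epanechnikov, Gaussian, etc.) one has $\int G^2<1$, so this factor is bounded by $1$ and need not be shoved into $A$. Finally, the reduction ``it suffices to prove the upper bound for compactly supported $f$ because the lower-bound densities are compactly supported'' is logically backwards: \autoref{thm:upperBound} is a supremum over all of $\nikol$, and you may not restrict the class over which that supremum is taken.
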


Our results so far have been finite sample bounds (which nonetheless
depend on $d$ and $n$). However, we also wish to know how quickly $d$
can increase so that the estimation risk can still go to zero
asymptotically (estimation consistency). Clearly, to have any hope that kernel
density estimators are consistent, $d$ must increase quite
slowly with $n$.

\begin{corollary}
\label{cor:d-rate}
  If
  $d = o\left(\frac{\beta \log n}{W(\beta \log n)}\right),$
  then
  \begin{align*}
    \sup_{f \in \nikol} \E_f \left[\norm{ \hat{f}_h(x) - f(x)}_p
    \right] &= o\left(1\right).
  \end{align*}
\end{corollary}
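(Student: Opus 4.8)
The plan is to deduce the corollary directly from the upper bound. By \autoref{thm:upperBound} the risk of $\hat f_h$ over $\nikol$ is $O\big((d^d/n^\beta)^{1/(2\beta+d)}\big)$, so, writing this quantity as $\exp\!\big(\frac{d\log d-\beta\log n}{2\beta+d}\big)$, it will be enough to show that the exponent tends to $-\infty$ whenever $d=o\big(\beta\log n/W(\beta\log n)\big)$. If $d$ fails to grow the claim is immediate, since then the numerator tends to $-\infty$ while the denominator stays bounded below by $2\beta$; so one may assume $d\to\infty$ throughout.

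The crux is a single identity for the Lambert $W$ function. From $W(x)e^{W(x)}=x$ one gets $\log W(x)+W(x)=\log x$, hence the number $y^\ast(x):=x/W(x)$ satisfies $y^\ast\log y^\ast = x$; i.e. $y\mapsto y\log y$ and $x\mapsto x/W(x)$ are inverse to each other for large arguments. Taking $x=\beta\log n$, the growth threshold in the statement is exactly the solution $y^\ast$ of $y^\ast\log y^\ast=\beta\log n$. Therefore, once $d\to\infty$ and $d\le y^\ast$ (which holds eventually, since $d=o(y^\ast)$), we have $\log d\le\log y^\ast$ and so
\[
d\log d\ \le\ \frac{d}{y^\ast}\,y^\ast\log y^\ast\ =\ \frac{d}{y^\ast}\,\beta\log n\ =\ o(\beta\log n).
\]
Plugging this back in, I would bound the numerator by $\beta\log n-d\log d\ge\tfrac12\beta\log n$ for $n$ large, and the denominator by $2\beta+d=o(y^\ast)=o\big(\beta\log n/W(\beta\log n)\big)$; the quotient then exceeds a constant multiple of $W(\beta\log n)$, which diverges, so $\frac{d\log d-\beta\log n}{2\beta+d}\to-\infty$ as required. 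A final small check is that the ``for $n$ large enough'' hypothesis of \autoref{thm:upperBound} — and the oracle bandwidth $h=A(d^2n)^{-1/(2\beta+d)}$ it uses — remains valid along the sequence $(n,d(n))$ under consideration.

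The only genuinely substantive point, rather than bookkeeping, is recognizing that it does not suffice to have $d^d$ be of smaller order than $n^\beta$: because the rate carries the exponent $1/(2\beta+d)$, one needs the stronger conclusion $d\log d=o(\beta\log n)$ \emph{and} simultaneous control of $2\beta+d$ against $\beta\log n/W(\beta\log n)$, and it is precisely the inverse-function identity for $W$ that identifies $\beta\log n/W(\beta\log n)$ as the threshold at which both hold. So the main (mild) obstacle is spotting the Lambert-$W$ identity and tracking the denominator $2\beta+d$ carefully; everything else is elementary asymptotics.
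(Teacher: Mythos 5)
Your proof is correct. The paper itself does not supply a proof of \autoref{cor:d-rate} (it only remarks on the series expansion $W(\log n)=\log\log n-\log\log\log n+o(1)$), so there is no proof to compare against; your argument fills that gap. The key device — using $W(x)e^{W(x)}=x$ to see that $y^\ast=x/W(x)=e^{W(x)}$ inverts $y\mapsto y\log y$, then applying it at $x=\beta\log n$ to get both $d\log d=o(\beta\log n)$ and the lower bound $\frac{\beta\log n-d\log d}{2\beta+d}\gtrsim W(\beta\log n)/c_n\to\infty$ with $c_n:=d/y^\ast\to 0$ — is exactly the right one and is implicit in the paper's remark. Your concluding caveat about the ``for $n$ large enough'' hypothesis of \autoref{thm:upperBound} along a sequence $(n,d(n))$ is legitimate, but the paper treats this at the same level of informality, so your proof is at parity with the source.
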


Here $W$ is the Lambert $W$ function, implicitly defined as the inverse
of $u \mapsto u\exp(u)$. For $n$ large, one can show using a series expansion
that $W(\log n) = \log\log 
n -\log\log\log n + o(1)$. So essentially, we require $d$ to grow
just slightly slower than $\log n$, the information theoretic rate for estimating
finite distributions with a sample from an ergodic process (see \autoref{sec:related-work}).

While we have stated both main theorems in terms of expectations,
analogous high-probability bounds can be derived similarly without extra effort.

\section{Lower bound for density estimation}
\label{sec:lower-bounds}

The technique we use for finding the lower bound is rather
standard. The idea is to convert the problem of density estimation
into one of hypothesis testing. This proceeds by first noting that the
probability that the error exceeds a constant is a lower bound for the
risk. We then
further reduce this lower bound by searching over only a finite class
rather than all possible densities. Finally, we ensure that there are
sufficiently many members in this class which are well-separated from
each other but difficult to distinguish from the true
density. Relative to previous techniques for minimax lower bounds for
density estimation, the main difference in our proof is that we must
choose different members of our finite class such that they have the
right dependence on $d$. Our construction will make use of the
Kullback-Leibler divergence.
\begin{definition}
  [KL divergence]
  The Kullback-Leibler divergence between two probability measures $P$
  and $P'$ is
  \[
  KL(P,P') = \begin{cases} \int dP \log\frac{dP}{dP'} & P\ll P'\\
    \infty &\textrm{else}.
  \end{cases}
  \]
\end{definition}
If both $P$ and $P'$ have Radon-Nikodym derivatives with respect to
the same dominating measure $\mu$, then we can replace distributions
with densities and integrate with respect to $\mu$. As long as the KL
divergence between the true density and the alternatives is small on average,
it will be difficult to discriminate between them. Therefore, the
probability of falsely rejecting the true density will be large. The following
lemma makes the process explicit.

\begin{lemma}[\citealt{Tsybakov2009}]\label{thm:tsybakov-KL-minimax}
  Let $\mathcal{L}: \R^+ \rightarrow \R^+$ which is monotone increasing with
  $\mathcal{L} (0)=0$ and $\mathcal{L} \not\equiv 0$, and let $A>0$ such that
  $\mathcal{L} (A) >0$. 
  \begin{enumerate}
  \item Choose elements $\theta_0,\theta_1,\ldots,\theta_M,$ $M \geq
    1$ in some class $\Theta$;
  \item Show that $\rho(\theta_j,\ \theta_k)\geq 2\tau > 0,$ $\forall 0 \leq j < k
    \leq M$ for some semi-distance $\rho$;
  \item Show that $P_{\theta_j} \ll P_{\theta_0}$, $\forall j=1,\ldots,M$ and 
    \[
    \frac{1}{M}\sum_{j=1}^M KL(P_{\theta_j},\ P_{\theta_0}) \leq \alpha \log M,
    \]
    with $0<\alpha < 1/8$.
  \end{enumerate}
  Then for $\psi=\tau/A$ we have
  \[
  \inf_{\hat{\theta}} \sup_{\theta\in\Theta} \E_\theta\left[
    \mathcal{L}(\psi^{-1} \rho(\hat\theta,\ \theta))\right] \geq c(\alpha) \mathcal{L} (A),
  \]
  where $\inf_{\hat\theta}$ denotes the infimum over all estimators
  and $c(\alpha)>0$ is a constant depending only on $\alpha$.
\end{lemma}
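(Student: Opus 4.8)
The plan is to run the classical reduction from minimax estimation to multiple hypothesis testing, in three steps, the last of which is a Fano-type bound driven by the average Kullback--Leibler divergence in the third hypothesis. First I would use the monotonicity and nonnegativity of $\mathcal{L}$ to get the pointwise inequality $\mathcal{L}(x)\geq \mathcal{L}(A)\,\mathbf{1}\{x\geq A\}$, so that for every estimator $\hat\theta$ and every $\theta\in\Theta$,
\[
\E_\theta\!\left[\mathcal{L}\!\left(\psi^{-1}\rho(\hat\theta,\theta)\right)\right]\ \geq\ \mathcal{L}(A)\,\P_\theta\!\left(\psi^{-1}\rho(\hat\theta,\theta)\geq A\right)\ =\ \mathcal{L}(A)\,\P_\theta\!\left(\rho(\hat\theta,\theta)\geq \tau\right),
\]
using $A\psi=\tau$. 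Restricting the supremum over $\Theta$ to the finite family $\{\theta_0,\dots,\theta_M\}$ and bounding the maximum over $j$ by the average gives
\[
\sup_{\theta\in\Theta}\E_\theta\!\left[\mathcal{L}\!\left(\psi^{-1}\rho(\hat\theta,\theta)\right)\right]\ \geq\ \mathcal{L}(A)\,\frac{1}{M+1}\sum_{j=0}^{M}\P_{\theta_j}\!\left(\rho(\hat\theta,\theta_j)\geq \tau\right).
\]

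Second, I would convert $\hat\theta$ into a test by setting $\psi^\star=\argmin_{0\leq j\leq M}\rho(\hat\theta,\theta_j)$ (ties broken by smallest index). The separation hypothesis $\rho(\theta_j,\theta_k)\geq 2\tau$ for $j\neq k$, together with the triangle inequality for the semi-distance $\rho$, forces $\{\psi^\star\neq j\}\subseteq\{\rho(\hat\theta,\theta_j)\geq \tau\}$: if $\rho(\hat\theta,\theta_j)<\tau$ then for every $k\neq j$ we get $\rho(\hat\theta,\theta_k)\geq \rho(\theta_j,\theta_k)-\rho(\hat\theta,\theta_j)>2\tau-\tau=\tau>\rho(\hat\theta,\theta_j)$, so $j$ is the unique minimizer and $\psi^\star=j$. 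Hence the average above is at least $p_{e,M}:=\inf_{\psi}\frac{1}{M+1}\sum_{j=0}^{M}\P_{\theta_j}(\psi\neq j)$, the infimum running over all tests $\psi$ with values in $\{0,\dots,M\}$, and taking the infimum over $\hat\theta$ on the left yields
\[
\inf_{\hat\theta}\sup_{\theta\in\Theta}\E_\theta\!\left[\mathcal{L}\!\left(\psi^{-1}\rho(\hat\theta,\theta)\right)\right]\ \geq\ \mathcal{L}(A)\,p_{e,M}.
\]

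Third, I would lower-bound $p_{e,M}$ using the KL hypothesis via a Fano-type inequality (in the form that uses the average KL to the single reference $P_{\theta_0}$ rather than to the mixture). The device is to split $\P_{\theta_j}(\psi\neq j)$ according to whether the log-likelihood ratio $\log(dP_{\theta_j}/dP_{\theta_0})$ lies above or below a threshold $-\lambda\log M$: on the ``typical'' part one changes measure to $P_{\theta_0}$, under which the events $\{\psi=j\}$, $j=1,\dots,M$, are disjoint and so average at most $1/M$; on the ``atypical'' part the probability is controlled, by a Markov-type bound, through $\frac1M\sum_{j=1}^M KL(P_{\theta_j},P_{\theta_0})\leq\alpha\log M$, which is legitimate precisely because $P_{\theta_j}\ll P_{\theta_0}$. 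Optimizing over $\lambda$ produces $p_{e,M}\geq \frac{\sqrt M}{1+\sqrt M}\,(1-2\alpha-\sqrt{2\alpha/\log M})$, which is strictly positive for $0<\alpha<1/8$ and nondecreasing in $M$, hence bounded below by a constant $c(\alpha)>0$ depending only on $\alpha$. Chaining the three displays gives the lemma.

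The main obstacle is this third step: establishing the Fano-type inequality with an explicit constant that depends only on $\alpha$, i.e. getting the change-of-measure/Markov bookkeeping to close after optimizing the threshold $\lambda$. A secondary subtlety is the degenerate case $M=1$, where $\log M=0$ makes the KL hypothesis force $P_{\theta_1}=P_{\theta_0}$; there one replaces the Fano step by the two-point (Le Cam) argument based on total variation, or simply takes $M\geq 2$ as in all applications (in particular in the proof of \autoref{thm:lowerBound}).
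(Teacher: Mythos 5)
The paper does not prove this lemma; it imports it verbatim from \citet{Tsybakov2009} (essentially Theorem~2.5 there together with the elementary generalization to a monotone loss $\mathcal{L}$). Your sketch correctly reproduces that textbook proof: the indicator lower bound from monotonicity, the reduction to an $(M{+}1)$-point testing problem via the $2\tau$-separation and the semi-metric triangle inequality, and the Fano-type bound $p_{e,M}\geq \frac{\sqrt M}{1+\sqrt M}\bigl(1-2\alpha-\sqrt{2\alpha/\log M}\bigr)$ driven by the averaged KL condition, including the observation that $M=1$ must be handled separately (or excluded, as it is in the application).
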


To use this result, we first choose a base density $f_0$ and $M$
alternative densities in $\nikol$. We then
show that these densities are sufficiently well-separated from each
other in the $L^p$-norm, $p\in [2,\infty)$, that is we take
$\rho(u,u')=\rho(u-u')=\norm{u-u'}_p$. Finally, we show that the 
KL-divergence between the alternatives and $f_0$ is uniformly small,
and therefore small on average. Our proof will use
$\mathcal{L}(u)=u$, though, as discussed following the proof, other
choices of monotone increasing functions (e.g.~$\mathcal{L}(u)=u^2$)
simply modify the conclusion but not the proof.

In order to get the ``right'' rate, we need to choose a base density
and a series of small perturbations to create a large collection of
alternatives. Getting the perturbations to be the right size and allow
sufficiently many of them is the main trick to derive tight bounds. In
our case, it is the choice $\Gamma(\mathbf{u})$ (described below) that
has this effect. The multiplicative dependence on $d$ turns out to be
the necessary deviation from existing lower bounds. Determining that
this is the appropriate modification is an exercise in trial-and-error, and
even this seemingly minor one is enough to compel a complete overhaul
of the proof.

\paragraph{The densities.}

Define $f_0(\x)=\frac{1}{\sigma^d}\prod_{i=1}^d \varphi(x_i/\sigma)$
where $\varphi(u)$ is the standard
Gaussian density.

Let $\Gamma_0: \R\rightarrow \R^+$ satisfy 
\begin{align*}
(i) \quad &|\Gamma^{(\ell)}_0(u) - \Gamma_0^{(\ell)}(u')| \leq
      |u-u'|^{\beta-\ell}/2,\\
&\quad\forall u,u',\ \ell\leq\lfloor \beta
      \rfloor,\\
(ii) \quad & \Gamma_0\in C^{\infty}(\R),\\
(iii)\quad &\Gamma_0(u) >0 \Leftrightarrow u \in(-1/2, 1/2).
\end{align*}
There exist many functions satisfying these conditions: 
e.g. $\Gamma_0(u) = a^{-1} \frac{d}{du} \exp(-1/(1-4u^2))I(|u| < 1/2)$ for some $a> 0$, 
since it is infinitely continuously
differentiable and $\norm{\Gamma^{(s)}_0}_\infty$ is decreasing in $s$. 

Define $\Gamma(\u) = dC \prod_{i=1}^d \Gamma_0(u_i)$, and for any
integer $m>0$, let
\[
\gamma_{m,\j}(\x) = m^{-\beta} \Gamma(m\x-\j),\ \
j\in\{1,\ldots,m\}^d. 
\]

Note that $\gamma_{m,\j}(\x)>0\Leftrightarrow \norm{\x}_\infty \leq
1$. Finally, take $f_\omega(\x) = f_0(\x) + \sum_\j \omega(\j)
\gamma_{m,\j}(\x)$ where for any $\j$, $\omega(\j) \in \{0,1\}$ so that
$\omega = \{\omega(\j)\}_{\j}$ is a binary vector in $\R^{(m-1)^d}$.

Now, we show that $f_0$, $f_\omega$ are densities in
$\nik{\beta}{C}$. For $f_0$, this is a density which is
infinitely differentiable, so we choose $\sigma>0$ such that
$\norm{f_0 ^{(s)} (\x)}_p \leq C/2$. We also have that for any $\j$, the
functions $\gamma_{m,\j}$ are non-zero only on non-intersecting
intervals of the form $(0,\ldots,\frac{j_i}{m} \pm
\frac{1}{2m},\ldots,0)$, so for any $|s| < \beta$,
\begin{align*}
\norm{\sum_\j \omega(\j)\left[\gamma^{(s)}_{m,\j}(\x+\mathbf{t}) -
  \gamma^{(s)}_{m,\j}(\x)\right]}_p 
  &\leq dCm^{-\beta+|s|}
    \sup_{|z|<t}\norm{\Gamma_0^{(s)}(x+z)
  - \Gamma^{(s)}_0(x)}^d_p\\
& \leq 2^{-d} d C m^{-\beta+|s|}\sup_{z\in[0,1]}|z|^{d(\beta-|s|)} < C/2,
\end{align*}
$\forall m>0$, so, $f_\omega$ is sufficiently smooth by the triangle
inequality.  As long as $f_\omega$ is a density, we will have
$f_\omega \in \nik{\beta}{C}$. First, $\int
\Gamma_0 = 0$, so $\int f_\omega=1$.  It remains to show that
$f_\omega \geq 0$. We have
\begin{equation}
  \norm{\sum_\j \omega(\j)\gamma_{m,\j}}_\infty
  \leq m^{-\beta} 
  \norm{\Gamma}_\infty \leq dC m^{-\beta}  
  \norm{\Gamma_0}^d_\infty.
  \label{eq:max-perturb}
\end{equation}
The smallest value taken by $f_0$ on the interval $[-1,1]$ where we
are adding perturbations is $\inf_{u\in [-1,1]}f_0(u) =
(\varphi(1/\sigma)/\sigma)^d$ . So, it is sufficient to
require~\eqref{eq:max-perturb} to be smaller. Therefore, we require 
\[
m> \left[ dC\left(\frac{\sigma \norm{\Gamma_0}_\infty}
    {\varphi(1/\sigma)} \right)^d\right]^{1/\beta}.
\]

\paragraph{Sufficient separation of alternatives.} 
We have for any $f_\omega$, $f_{\omega'}$,
\begin{align*}
  \norm{f_\omega-f_{\omega'}}_p 
  &= \norm{\sum_\j
    (\omega(\j)-\omega'(\j))\gamma_{m,\j}}_p
  = m^{-\beta-d/p} H^{1/p}(\omega,\omega')\norm{\Gamma}_p,
\end{align*}
where $H$ is the Hamming distance between binary vectors. We will use
some of the $f_\omega$ as our collection of $M$ alternatives. But we
need to know how many there are in the collection that are far enough
apart. The following theorem tells us about the size of such a
collection.
\begin{lemma}[Varshamov-Gilbert; \citealt{Tsybakov2009}] Let $m\geq 8$. Then there is a  subset
  $\mathcal{D}$ of densities $f_\omega$ such that for all
  $\omega,\omega' \in \mathcal{D}$, $H(\omega,\omega')\geq m^d /
  8$ and $|\mathcal{D}| \geq \exp\{m^d/8\}$. 
\end{lemma}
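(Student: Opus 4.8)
The plan is to reduce the claim to the classical Varshamov--Gilbert bound (Lemma~2.9 of \citealt{Tsybakov2009}) applied to the binary hypercube $\{0,1\}^N$ with $N=m^d$ equal to the number of cells indexed by $\j\in\{1,\ldots,m\}^d$, and then to read the combinatorial packing back as a family of densities. First I would let $\Omega\subseteq\{0,1\}^N$ be a \emph{maximal} set of binary vectors with the property that any two distinct members are at Hamming distance at least $N/8$, and take $\mathcal{D}=\{f_\omega:\omega\in\Omega\}$. Because the $\gamma_{m,\j}$ have pairwise disjoint supports (shown above), the map $\omega\mapsto f_\omega$ is injective, so $|\mathcal{D}|=|\Omega|$; each $f_\omega$ is already known to lie in $\nik{\beta}{C}$ once $m$ is large enough, and the separation requirement $H(\omega,\omega')\geq m^d/8$ holds by construction. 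It remains only to bound $|\Omega|$ from below.

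For that I would run the standard covering argument. By maximality, every $\omega\in\{0,1\}^N$ lies within Hamming distance $N/8$ of some element of $\Omega$ (otherwise it could be adjoined, contradicting maximality), so the Hamming balls of radius $N/8$ about the points of $\Omega$ cover the cube, giving
\[
|\Omega|\;\geq\;\frac{2^N}{\sum_{k\leq N/8}\binom{N}{k}}\;\geq\;2^{N\left(1-H_2(1/8)\right)},
\]
where the last step is the usual entropy bound for a partial binomial sum (valid since $1/8<1/2$) with $H_2(q)=-q\log_2 q-(1-q)\log_2(1-q)$. Since $H_2(1/8)<0.544$, one has $1-H_2(1/8)>0.456$, whence $|\Omega|\geq 2^{0.456\,N}\geq e^{0.316\,N}\geq e^{N/8}=\exp\{m^d/8\}$, which is the asserted bound. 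The hypothesis $m\geq 8$ (indeed $m^d\geq 8$ would do) just makes the radius $N/8$ at least one, so the argument is non-vacuous.

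There is no real obstacle here --- the result is textbook --- and the only thing one must check is the numerical fact that $1-H_2(1/8)$ comfortably exceeds $1/(8\ln 2)$, so that the $2^{(\cdot)N}$ lower bound dominates the requested $e^{N/8}$. If one wanted to avoid the entropy estimate, an alternative is the probabilistic alteration argument: draw $2M$ vectors i.i.d.\ uniformly on $\{0,1\}^N$, note $H(\omega,\omega')\sim\mathrm{Binomial}(N,1/2)$ for a fixed pair so that $\P(H<N/8)\leq e^{-9N/32}$ by Hoeffding's inequality, deduce that the expected number of offending pairs is at most $\binom{2M}{2}e^{-9N/32}<M$ as soon as $M\leq\tfrac12 e^{9N/32}$, and delete one vector from each offending pair to be left with at least $M\geq e^{N/8}$ pairwise-separated vectors. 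I would present the covering version, being shorter and deterministic.
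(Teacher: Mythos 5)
Your proof is correct, and it is a clean reconstruction of a result the paper only cites (to Tsybakov 2009, Lemma~2.9) without reproving. Since there is no proof in the paper to compare against line by line, the relevant check is whether your argument actually establishes the stated claim — and it does.

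A few remarks on the comparison. First, you correctly identify the reduction: the $\gamma_{m,\j}$ have disjoint supports, so $\omega\mapsto f_\omega$ is injective and the whole problem is a packing statement on $\{0,1\}^{N}$ with $N=m^d$. (There is an inconsistency in the paper's prose, which at one point writes $\omega\in\R^{(m-1)^d}$, but the lemma itself says $m^d$ and that is what matters.) Second, the paper's stated cardinality bound $|\mathcal{D}|\geq e^{m^d/8}$ is actually slightly \emph{stronger} than Tsybakov's Lemma~2.9, which gives $M\geq 2^{m^d/8}=e^{(\ln 2)m^d/8}$; a verbatim citation would not quite cover the stated constant. Your covering-plus-entropy argument, giving $|\Omega|\geq 2^{N(1-H_2(1/8))}\approx e^{0.316N}$, comfortably dominates $e^{N/8}$, so it proves the paper's version as written. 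Your numerical checks ($H_2(1/8)\approx 0.544$, $1-H_2(1/8)>1/(8\ln 2)\approx 0.180$) are right, and the Hoeffding alteration alternative you sketch is also correct ($\P(\mathrm{Bin}(N,1/2)<N/8)\leq e^{-2N(3/8)^2}=e^{-9N/32}$, and the threshold $N\geq 32\ln 2/5\approx 4.44$ is met since $m\geq 8$). The one stylistic difference from Tsybakov's own proof is that he uses a greedy iterative construction with a direct binomial-tail bound rather than a maximality-plus-entropy covering argument, but these are interchangeable here and yours is shorter.
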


We now restrict our collection of densities to be only those
corresponding to the set $\mathcal{D}$. Then,
\begin{align*}
  m^{-\beta-d/p} H^{1/p}(\omega,\omega')\norm{\Gamma}_p
  &\geq
  m^{-\beta-d/p} \left(\frac{m^d}{8}\right)^{1/p} dC \norm{\Gamma_0}_p^d
  = 8^{-1/p}dm^{-\beta} C\norm{\Gamma_0}_p^d.
\end{align*}

\paragraph{Constant likelihood ratio.}

We have that for distributions $P_0$ with density $f_0$ and $P_\omega$
with density $f_\omega\in \mathcal{D}$,
\begin{align*}
  KL(P_\omega, P_0)
  &= n\int_{\R^d} d\x f_\omega(\x) \log\frac{f_\omega(\x)}{f_0(\x)}\\
  &=n \int_{\R^d} d\x \left( \frac{1}{\sigma^d}\prod_{i=1}^d
    \varphi(x_i/\sigma) + \sum_{\j}
    \omega(\j)\gamma_{m,\j}(\x)\right)\\
  &\quad\times
    \left[ \log \left( \frac{1}{\sigma^d}\prod_{i=1}^d
    \varphi(x_i/\sigma) + \sum_{\j} \omega(\j)\gamma_{m,\j}(\x)\right)
   - \log \left(\frac{1}{\sigma^d}\prod_{i=1}^d
    \varphi(x_i/\sigma)\right)\right]\\
  &\leq n\int_{\R^d} d\x \left( \frac{1}{\sigma^d}\prod_{i=1}^d
    \varphi(x_i/\sigma) + \sum_{\j} \omega(\j)\gamma_{m,\j}(\x)\right)
    \left[\frac{ \sum_{\j} \omega(\j)\gamma_{m,\j}(\x)}
     {\frac{1}{\sigma^d}\prod_{i=1}^d 
    \varphi(x_i/\sigma)} \right]\\
  &=\int_{[0,1]^d} d\x \frac{ \left(\sum_{\j}
    \omega(\j)\gamma_{m,\j}(\x)\right)^2}
    {\frac{1}{\sigma^d}\prod_{i=1}^d 
    \varphi(x_i/\sigma)}\\
  &\leq
    n\left(\frac{\sigma\norm{\Gamma_0}_2^2}{\varphi(1/\sigma)}\right)^d
    d^2 C^2 m^{-2\beta}
\end{align*}
Therefore, 
we must choose $m$ so that for $n$, $d$, large enough, 
\[
n\left(\frac{\sigma\norm{\Gamma_0}_2^2}{\varphi(1/\sigma)}\right)^d
    C^2 m^{-2\beta}
\leq \alpha\log |\mathcal{D}|
\]
with $0<\alpha<1/8$. This is equivalent to requiring
\begin{align*}
8\left(\frac{\sigma\norm{\Gamma_0}_2^2}{\varphi(1/\sigma)}\right)^d
    d^2C^2 m^{-2\beta-d} \leq \frac{1}{8n}
\end{align*}
which is equivalent to
\[ m \leq \left[\frac{1}{(8C)^2}  (d^2n)
\left(\frac {\sigma\norm{\Gamma_0}_2^2}{\varphi(1/\sigma)}
\right)^{d}\right]^{\frac{1}{2\beta+d}}.
\]

\paragraph{Completing the result.}

Combining the results of the previous two sections gives us the
following lower bound on density estimators in increasing dimensions.
\begin{proof}[Proof of \autoref{thm:lowerBound}]
  Choose an integer $m = \norm{\Gamma_0}_p^{(d+1)/\beta}\kappa^{-1}_d
  (d^2n)^{1/(2\beta+d)}$ where for convenience we define $\kappa_d := (64C^2)^{1/{2\beta+d}} 
  \left(\frac{\varphi(1/\sigma)}{\sigma\norm{\Gamma_0}_2^2}
  \right)^{d/(2\beta+d)} \xrightarrow{d\rightarrow\infty} \kappa = \frac{\varphi(1/\sigma)}{\sigma\norm{\Gamma_0}_2^2}$. Note that
  $\kappa_d < \kappa$ for all  $d$ so $\kappa_d^{-1}>\kappa^{-1}$. 
  Then, we have the following:
  \begin{enumerate}
  \item The functions $f_0, f_\omega$ are densities in $\nik{\beta}{C}$ as, for
    $n>n^*$, $m>\left[ dC\left(\frac{\sigma \norm{\Gamma_0}_\infty} 
        {\varphi(1/\sigma)} \right)^d\right]^{1/\beta}.$
  \item For all $f_\omega$, $f_{\omega'} \in \mathcal{D}$,
  \begin{align*}
  \norm{f_\omega-f_{\omega'}}_p 
    \geq 8^{-1/p}dm^{-\beta} C \norm{\Gamma_0}_p^d
    &= 8^{-1/p}d \left(\norm{\Gamma_0}_p^{(d+1)/\beta}\kappa^{-1}_d
  (d^2n)^{1/(2\beta+d)}\right)^{-\beta} C \norm{\Gamma_0}_p^d\\
    &= 2 (8^{-1/p}) C\norm{\Gamma_0}_p
      \kappa_d^{-\beta} d^{d/(2\beta+d)} n^{-\beta/(2\beta+d)}\\
    &\geq 2\frac{C}{2} 8^{-1/p}\kappa^{-\beta}d^{d/(2\beta+d)}
      n^{-\beta/(2\beta+d)}
    =:2A\psi_{nd},
  \end{align*}
  where 
  $A=\frac{C}{2} 8^{-1/p}\kappa^{-\beta}$ and
  $\psi_{nd} = (d^dn^{-\beta})^{1/(2\beta+d)}$.
\item $\frac{1}{M}\sum_{\omega \in \mathcal{D}} KL(P_\omega,P_0) \leq
  \alpha \log |\mathcal{D}|$ since $\norm{\Gamma_0}_p^{(d+1)/\beta} <
  1$ for all $d,\beta$ by construction of $\Gamma_0$. Therefore, 
  \[
  m \leq \left[\frac{1}{8C^2}  (d^2n)
\left(\frac {\sigma\norm{\Gamma_0}_2^2}{\varphi(1/\sigma)}
\right)^{d}\right]^{\frac{1}{2\beta+d}}.
  \]
\end{enumerate}
Therefore, all the conditions of \autoref{thm:tsybakov-KL-minimax} are
satisfied. 
\end{proof}

We note that \autoref{thm:tsybakov-KL-minimax} actually allows more
general lower bounds which are immediate consequences of those
presented here. In particular, we are free to choose $\rho$ to be
other distances than $L^p$-norms, and we may take powers of those
norms or apply other monotone-increasing functions $\mathcal{L}$. For
example, this gives the standard lower bound under the mean-squared
error. We will not pursue these generalities further here, however, as
finding matching upper bounds is often more difficult, requiring
specific constructions for each combination $\mathcal{L}$ and
$\rho$. Deriving lower bounds for $1\leq p<2$ is also of interest,
although this requires more complicated proof techniques. The case of
$p=\infty$ is actually a fairly straightforward extension, and we
discuss it briefly in \autoref{sec:discussion}.

\section{Upper bound for kernels}
\label{sec:upper-bounds}

To prove \autoref{thm:upperBound}, we first use the triangle
inequality to decompose the loss into a bias component and a variance 
component:
\begin{align*}
\E\left[\norm{\hat{f}_h-f}_p\right] 
&\leq
\E\left[\norm{\hat{f}_h-\E\hat{f}_h}_p\right] +
\norm{\E\hat{f}_h-f]}_p \\
  &=: \E\left[\left(\int |\sigma(x)|^p \right)^{1/p}\right]  + \left(\int|b(x)|^p\right)^{1/p}.
\end{align*}
We now give two lemmas which bound these components separately. For
the bias, we will need a well known preliminary result.

\begin{lemma}
  [Minkowski's integral inequality]
  \label{lem:minkowski}
  Let $(\Omega_1, \Sigma_1,\mu_1),\ (\Omega_2, \Sigma_2,\mu_2)$ be
  measure spaces, and let $g: \Omega_1\times\Omega_2 \rightarrow
  \R$. Then for $p\in[1,\infty]$
  \begin{align*}
  \left[\int_{\Omega_2} \left|\int_{\Omega_1}
    g(x_1,x_2)d\mu_1(x_1)\right|^p d\mu_2(x_2)\right]^{1/p}
  &\leq \int_{\Omega_1} \left[\int_{\Omega_2}
    \left|g(x_1,x_2)\right|^p d\mu_2(x_2)\right]^{1/p} d\mu_1(x_1),
  \end{align*}
  with appropriate modifications for $p=\infty$.
\end{lemma}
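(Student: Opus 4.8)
The plan is to reduce to the case $g\ge 0$ and $1<p<\infty$, then run the classical ``peel off one power of the outer norm and invoke H\"older'' argument. First I would note that, pointwise in $x_2$, $\left|\int_{\Omega_1} g(x_1,x_2)\,d\mu_1(x_1)\right| \le \int_{\Omega_1} |g(x_1,x_2)|\,d\mu_1(x_1)$, so replacing $g$ by $|g|$ only enlarges the left-hand side while leaving the right-hand side unchanged; hence we may assume $g\ge 0$. The endpoint cases are immediate: for $p=1$ the inequality is just Tonelli's theorem (in fact an equality), and for $p=\infty$ it follows by pulling the essential supremum over $\Omega_2$ inside the integral over $\Omega_1$. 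Throughout I assume the measure spaces are $\sigma$-finite, which holds in every application in this paper and is what lets us invoke Tonelli below.

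For $1<p<\infty$, set $F(x_2)=\int_{\Omega_1} g(x_1,x_2)\,d\mu_1(x_1)$ and let $q=p/(p-1)$. The core computation is
\[
\int_{\Omega_2} F(x_2)^p\,d\mu_2(x_2)
=\int_{\Omega_2} F(x_2)^{p-1}\left(\int_{\Omega_1} g(x_1,x_2)\,d\mu_1(x_1)\right)d\mu_2(x_2)
=\int_{\Omega_1}\left(\int_{\Omega_2} F(x_2)^{p-1}g(x_1,x_2)\,d\mu_2(x_2)\right)d\mu_1(x_1),
\]
the last step being Tonelli. Applying H\"older's inequality in $x_2$ with exponents $q$ and $p$ bounds the inner integral by $\left(\int_{\Omega_2} F(x_2)^{p}\,d\mu_2(x_2)\right)^{1/q}\left(\int_{\Omega_2} g(x_1,x_2)^p\,d\mu_2(x_2)\right)^{1/p}$, since $(p-1)q=p$. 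Pulling the first factor (which does not depend on $x_1$) out of the $\mu_1$-integral and dividing both sides by it, the exponent arithmetic $p-p/q=1$ delivers exactly the claimed bound.

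The only real obstacle is justifying that division, since a priori $\norm{F}_p$ could be $0$ or $+\infty$. If $\norm{F}_p=0$ the inequality is trivial, and if the right-hand side is infinite there is nothing to prove; so assume the right-hand side is finite. Then I would argue by truncation: choose an increasing sequence $E_N\uparrow\Omega_2$ of sets of finite $\mu_2$-measure (available after restricting to the $\sigma$-finite subset of $\Omega_2$ on which $g$ is effectively supported) and replace $F^{p-1}$ in the core computation by $\left(\min(F,N)\mathbf{1}_{E_N}\right)^{p-1}$. Every quantity in sight is then finite, the division is legitimate, and one obtains $\norm{\min(F,N)\mathbf{1}_{E_N}}_p\le\int_{\Omega_1}\norm{g(x_1,\cdot)}_p\,d\mu_1(x_1)$ uniformly in $N$; letting $N\to\infty$ and applying the monotone convergence theorem finishes the proof. (In practice, since the uses of this lemma in \autoref{sec:upper-bounds} involve only $\sigma$-finite measures and finite integrals, one may simply impose those hypotheses and omit the truncation step.)
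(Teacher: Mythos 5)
Your proof is correct: it is the standard H\"older-duality argument for Minkowski's integral inequality (write $F^p = F^{p-1}F$, swap the order of integration via Tonelli, apply H\"older in $x_2$ with conjugate exponents $q$ and $p$, and divide through by $\norm{F}_p^{p/q}$), together with the usual truncation device to legitimize the division and the correct observation that $\sigma$-finiteness is needed for Tonelli. The paper itself does not supply a proof of this lemma --- it is labelled a ``well known preliminary result'' and used without argument --- so there is no authorial proof to compare against; your proposal simply fills in the standard derivation, and does so accurately, including the easy endpoint cases $p=1$ (Tonelli, with equality) and $p=\infty$ (essential supremum pulled inside).
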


\begin{lemma}
  \label{lem:bias}
  Let $f \in \nikol$ for $p\in [1,\infty)$ and let $K$ be an isotropic Kernel of order $\ell=\lfloor
  \beta \rfloor$.
  Then for all $h>0$, $d\geq 1$, and $n\geq 1$,
  \[
  \int |b(x)|^p dx := \int \left|\E \hat{f}_h(x) - f(x)\right|^p dx = O\left( d^p h^{p\beta}\right).
  \]
\end{lemma}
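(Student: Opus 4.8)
The plan is to reduce the $d$-dimensional bias to a sum of $d$ one-dimensional biases, each controlled at the usual one-dimensional rate $h^\beta$ by condition (iv) of \autoref{def:nikolskii}. First I would change variables: since $\E\hat f_h(\x) = \int K(\u)\,f(\x - h\u)\,d\u$ and $\int K = 1$ (the kernel is the product of the one-dimensional $G$'s, each integrating to $1$), the bias equals $b(\x) = \int_{\R^d} K(\u)\bigl[f(\x - h\u) - f(\x)\bigr]\,d\u$.

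Next I would telescope over coordinates. For $0\le k\le d$ let $\x^{(k)}(\u)$ agree with $\x - h\u$ in the first $k$ coordinates and with $\x$ in the remaining ones, so that $f(\x-h\u) - f(\x) = \sum_{k=1}^d\bigl[f(\x^{(k)}(\u)) - f(\x^{(k-1)}(\u))\bigr]$ and the $k$-th summand depends on $\u$ only through $u_1,\dots,u_k$. Integrating $K(\u)=\prod_i G(u_i)$ against it, the factors $\int G(u_i)\,du_i=1$ for $i>k$ drop out, giving $b=\sum_{k=1}^d b_k$, where each $b_k$ is a one-dimensional bias in coordinate $k$ integrated against $\prod_{i<k}G(u_i)$ over the preceding coordinates.

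To bound $b_k$ I would freeze the other coordinates, recognize the inner $u_k$-integral as the one-dimensional bias $\int_\R G(u_k)\bigl[\phi(x_k-hu_k)-\phi(x_k)\bigr]\,du_k$ of the slice $\phi$ of $f$ along coordinate $k$, Taylor-expand $\phi$ to order $\ell=\lfloor\beta\rfloor$ in integral-remainder form, and cancel the polynomial part using $\int G(u)u^j\,du=0$ for $1\le j\le\ell$ and $\int G=1$ --- the $j=\ell$ moment also vanishes, which lets $D^{\ell\mathbf{e}_k}f(x_k)$ be subtracted inside the remainder. Then I would take $\norm{\cdot}_p$ in $\x$, push the norm through the $u_k$- and $\tau$-integrals with \autoref{lem:minkowski}, and note that the residual difference $\norm{D^{\ell\mathbf{e}_k}f(\,\cdot\,-\tau hu_k\mathbf{e}_k+\text{shift})-D^{\ell\mathbf{e}_k}f(\,\cdot\,+\text{shift})}_p$ loses its common shift by translation-invariance of Lebesgue measure, so condition (iv) with $|s|=\ell$ bounds it by $C(\tau h|u_k|)^{\beta-\ell}$, a quantity involving the single coordinate $u_k$ only. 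Performing the $\tau$- and $u_k$-integrals leaves $\norm{b_k}_p\le C''h^\beta\bigl(\int_\R|G|\bigr)^{k-1}$, with $C''$ depending only on $\beta$, $p$, and the (fixed) kernel --- through $\int|u|^\beta|G(u)|\,du<\infty$ from \autoref{def:kernel} and a Beta-function factor --- but not on $d$, $n$, or $h$. Summing over $k$ with $\int_\R|G|=1$ for the standard kernels (the principal case $\beta\le 2$), $\norm{b}_p\le\sum_{k=1}^d\norm{b_k}_p=O(d\,h^\beta)$, i.e.\ $\int|b(\x)|^p\,d\x=O(d^p h^{p\beta})$.

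The step I expect to be the main obstacle is extracting the \emph{correct}, linear, dependence on $d$ rather than a larger power: a naive multivariate Taylor argument over-counts because the $\ell_1$-type smoothness in \autoref{def:nikolskii} sees $\norm{\u}_1$, which is of order $d$ on the support of $K$, whereas the telescoping reduces matters to $d$ genuinely one-dimensional estimates whose shifts live on single coordinates, so translation-invariance makes each one $O(h^\beta)$ uniformly. A secondary, purely technical, point is that the Taylor step must be arranged to use only the $L^p$-modulus of smoothness (iv) of $D^{\lfloor\beta\rfloor}f$, no pointwise regularity of the top-order derivative being assumed.
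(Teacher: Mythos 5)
Your proof takes a genuinely different route from the paper's, and the difference matters. The paper expands $f(x+uh)$ with the full multivariate Taylor theorem (integral remainder over multi-indices $|s|=\ell$), pushes the $L^p$-norm in $x$ through via \autoref{lem:minkowski}, and applies condition (iv) of \autoref{def:nikolskii} with shift $t=\tau h u$, which produces the quantity $\int |K(u)|\,\norm{u}_1^\beta\,du$; the proof then rewrites this as $\sum_{i=1}^d\int|u_i|^\beta|G(u_i)|\,du_i$ to conclude $O(d)$. As you observe, that last step is precisely where a naive multivariate argument over-counts: for $\beta\geq 1$ one has $\norm{u}_1^\beta=(\sum_i|u_i|)^\beta\geq\sum_i|u_i|^\beta$, so this rewriting goes the wrong direction, and on the support of $K$ the quantity $\norm{u}_1^\beta$ is generically of order $d^\beta$, not $d$. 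Your coordinate-by-coordinate telescoping is exactly the device that fixes this: each of the $d$ summands invokes condition (iv) with a shift supported on a single coordinate, so the Nikol'skii modulus only sees $|u_k|$, and translation-invariance of Lebesgue measure disposes of the nuisance offset in the earlier coordinates. In the range where it applies, your argument is the more careful one, and it produces precisely the linear-in-$d$ bias that the final rate requires.

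The gap is the one you flag yourself, and it is genuine. After \autoref{lem:minkowski} you are left with the factor $\prod_{i<k}\int_{\R}|G(u_i)|\,du_i=\bigl(\int|G|\bigr)^{k-1}$ on the $k$-th term. This equals $1$ only when $G\geq 0$; but any kernel satisfying $\int G=1$ and $\int u^j G=0$ for $0<j\leq\lfloor\beta\rfloor$ with $\lfloor\beta\rfloor\geq 2$ must take negative values, so $\int|G|>1$ and the factor grows geometrically in $k$, destroying the $O(d)$ sum. Since both the lemma and \autoref{thm:upperBound} are stated for all $\beta>1$ (and kernels of arbitrary order are explicitly contemplated in \autoref{def:kernel}), restricting to ``standard kernels'' with $\beta\leq 2$ leaves the higher-smoothness case unproved. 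To close it you would need to extract the cancellations from the leading-coordinate $G$-integrals before taking absolute values, rather than after; as written, the argument does not do this.
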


For the bias, the proof technique depends on the smoothness of the
density $f$ as well as the smoothness of the kernel. It also holds for
any $p\in[1,\infty)$.

\begin{proof}
  By Taylor's theorem
  \begin{align*}
    f(x+uh) = f(x) + \sum_{|s|=1} u^sh D^s f(x) + \cdots +
    \frac{h^\ell}{(\ell-1)!} \sum_{|s|=\ell} u^s \int_0^1
    (1-\tau)^{\ell-1} D^s f(x+\tau u h) d\tau.
  \end{align*}
  Since the kernel is of order $\ell$, lower order polynomials in $u$
  are equal to 0, so 
  \begin{align*}
    |b(x)| 
    &= \left|\int du \Omega_\ell(u)
    \left[ \sum_{|s|=\ell}
      u^s \int_0^1 d\tau
      (1-\tau)^{\ell-1} D^s f(x+\tau u h) \right] \right|\\
    &= \left|\int du \Omega_\ell(u)\left[ \sum_{|s|=\ell} u^s \int_0^1 d\tau
      (1-\tau)^{\ell-1} \Delta(x,\tau) \right] \right|,
  \end{align*}
  where $\Delta(x,\tau) = D^s f(x+\tau u h) - D^s f(x)$ and $\Omega_\ell(u) =
  K(u) \frac{h^\ell}{(\ell-1)!}$.
  Now applying \autoref{lem:minkowski} twice, 
  \begin{align*}
    \int |b(x)|^p dx
    &\leq \int dx \left( \int du |\Omega_\ell(u)|\norm{u}_1^\ell
      \int_0^1 d\tau 
      (1-\tau)^{\ell-1} \left|\Delta(x,\tau) \right|
      \right)^p \\
    &\leq \left(\int du
        |\Omega_\ell(u)|\norm{u}_1^\ell
      \left[\int dx \left( \int_0^1 d\tau
      (1-\tau)^{\ell-1} \left|\Delta(x,\tau) \right|
      \right)^p \right]^{1/p} \right)^p\\
    &\leq \left(\int du
        |\Omega_\ell(u)|\norm{u}_1^\ell 
    \int_0^1 d\tau (1-\tau)^{\ell-1} \left( \int dx
      \Delta(x,\tau)^p \right)^{1/p}\right)^p.
  \end{align*}
  Because $f \in \nikol$, we have
  \[
  \left( \int dx
  \Delta(x,\tau)^p \right)^{1/p} \leq C (\tau h\norm{u}_1)^{\beta-\ell}.
  \]
  So,
  \begin{align*}
    \int |b(x)|^p
    &\leq \left( \int
      du|\Omega_\ell(u)|\norm{u}_1^\ell
      \left[
      \int_0^1 d\tau(1-\tau)^{\ell-1} C (\tau h \norm{u}_1)^{\beta-\ell}
      \right]  \right)^p\\
    &= \left( \int  du |K(u)|\frac{ C \norm{u}_1^\beta h^\beta}{(\ell-1)!} \left[
      \int_0^1 d\tau (1-\tau)^{\ell-1} \tau^{\beta-\ell}
      \right] \right)^p\\
    &= AC h^{p\beta}\left(\sum_{i=1}^d \int |u_i|^\beta |G(u_i)|du_i\right)^p
    = O(d^p h^{p\beta}).
  \end{align*}
\end{proof}

Next we find an upper bound on the variance component. This result
does not depend on the smoothness of the density, only on properties
of the kernel. It does however depend strongly on $p$. Finally, note
that the result is non-random, so we can ignore the outer expectation.

\begin{lemma}
  \label{lem:var}
  Let $K: \R^d\rightarrow\R$ be a function satisfying
  $
  \int K^2(u)du < \infty.
  $
  Then for any $h>0$, $n\geq 1$ and any probability density $f$, and $p\geq1$,
  \begin{align*}
  \int |\sigma(x)|^pdx &= \int 
  \left(\hat{f}_h(x) - \E \hat{f}_h(x)\right)^p dx=O\left(\left(\frac{1}{nh^d}\right)^{p/2}\right).
  \end{align*}
\end{lemma}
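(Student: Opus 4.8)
The plan is to compute the integrated variance pointwise and then integrate, rather than trying to control the $L^p$ norm directly. Since $\sigma(x)^p = (\hat f_h(x) - \E\hat f_h(x))^p$ appears under a $(1/p)$-power in the risk decomposition, and the final bound is stated as $O((nh^d)^{-p/2})$, the natural route is: first bound the pointwise variance $\Var(\hat f_h(x)) = \E[\sigma(x)^2]$, then invoke a moment inequality to pass from the second moment to the $p$-th moment of $|\sigma(x)|$, and finally integrate over $x$. I would begin by writing $\hat f_h(x) = \frac{1}{n}\sum_{i=1}^n Y_i$ with $Y_i = h^{-d} K((x-X_i)/h)$ i.i.d., so that $\Var(\hat f_h(x)) = \frac{1}{n}\Var(Y_1) \leq \frac{1}{n}\E[Y_1^2] = \frac{1}{nh^{2d}}\int K^2((x-y)/h) f(y)\,dy = \frac{1}{nh^d}\int K^2(u) f(x-uh)\,du$. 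Integrating this in $x$ and using Fubini together with $\int f = 1$ gives $\int \Var(\hat f_h(x))\,dx \leq \frac{1}{nh^d}\int K^2(u)\,du = O(1/(nh^d))$, which is exactly the $p=2$ case.

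For general $p \geq 1$ the obstacle is that $\E|\sigma(x)|^p$ is not simply a power of $\E[\sigma(x)^2]$ for a sum of i.i.d.\ centered variables. Here I would apply the Marcinkiewicz--Zygmund (or Rosenthal) inequality: for $n^{-1}\sum (Y_i - \E Y_i)$ one has $\E\big|\sum_{i=1}^n (Y_i - \E Y_i)\big|^p \leq A_p\, \E\big(\sum_{i=1}^n (Y_i-\E Y_i)^2\big)^{p/2}$, and for $1 \leq p \leq 2$ this is even more elementary (by Jensen / concavity one gets $\E|\sigma(x)|^p \leq (\E\sigma(x)^2)^{p/2}$ directly). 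Since we only need $p \geq 1$ and the constant $A_p$ is allowed to depend on $p$, I would state it in the form $\E|\sigma(x)|^p \leq A_p\, n^{-p/2}\,\big(\E[Y_1^2]\big)^{p/2}$ up to lower-order terms; the dominant contribution is $A_p\, n^{-p/2} \big(h^{-2d}\int K^2(u) f(x-uh)\,du\big)^{p/2}$. Then integrate in $x$:
\begin{align*}
\int \E|\sigma(x)|^p\,dx
&\leq A_p\, n^{-p/2} h^{-dp} \int \left(\int K^2(u) f(x-uh)\,du\right)^{p/2}\,dx.
\end{align*}

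The remaining step is to bound $\int \big(\int K^2(u) f(x-uh)\,du\big)^{p/2}\,dx$ by something of order $h^{dp/2}$ (so that $h^{-dp}\cdot h^{dp/2} = h^{-dp/2}$, matching the claim). This is where Minkowski's integral inequality (\autoref{lem:minkowski}) does the work: viewing $g(u,x) = K^2(u) f(x-uh)$ and noting $p/2 \geq 1/2$ — if $p \geq 2$ we apply \autoref{lem:minkowski} with exponent $p/2$ to pull the $u$-integral outside, getting $\big[\int (\int K^2(u) f(x-uh)\,du)^{p/2} dx\big]^{2/p} \leq \int K^2(u)\big(\int f(x-uh)^{p/2} dx\big)^{2/p} du$, and then a scaling $x \mapsto x/h$ or the boundedness assumption on $f$ controls $\int f^{p/2}$; if $1 \leq p < 2$ one first uses that $\int K^2(u) f(x-uh)\,du \leq \|K^2\|_1\,\|f\|_\infty$ pointwise where applicable, or more cleanly handles it as above. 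Collecting the powers of $h$ then yields $\int|\sigma(x)|^p\,dx = O((nh^d)^{-p/2})$, and since this is nonrandom the outer expectation is vacuous. I expect the genuinely delicate point to be handling $\int (\int K^2 f(\cdot - uh))^{p/2}$ cleanly for $1 \leq p < 2$ without extra hypotheses on $f$ — the cheap resolution is to absorb it using $\|f\|_\infty < \infty$ (which holds for $f \in \nikol$ by Sobolev embedding for $\beta > 1$) so that the inner integral is bounded by a constant times $h^d$, giving the claimed rate with a constant depending only on $p$, $\|K\|_2$, and $\|f\|_\infty$.
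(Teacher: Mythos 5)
Your $p=2$ computation is identical to the only thing the paper writes out, and for general $p$ your high-level route --- Marcinkiewicz--Zygmund/Rosenthal to pass from variance to $p$-th moment, then Minkowski/Jensen and Fubini to integrate over $x$ --- is exactly the one the paper delegates to Proposition~1 of the cited Mason reference. So the approach is the right one. However, there is a bookkeeping slip in the $h$-powers that happens to self-cancel. After the change of variables $u=(x-y)/h$ one has
$\E[Y_1^2] \;=\; h^{-2d}\!\int K^2\!\big((x-y)/h\big) f(y)\,dy \;=\; h^{-d}\!\int K^2(u)\,f(x-uh)\,du,$
with the Jacobian $h^d$ absorbing one factor of $h^{-d}$; you wrote $h^{-2d}\!\int K^2(u)f(x-uh)\,du$, which is off by $h^d$. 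Consequently your displayed bound should read $A_p\,n^{-p/2}\,h^{-dp/2}\!\int\!\big(\!\int K^2(u)f(x-uh)\,du\big)^{p/2}dx$, not $h^{-dp}(\cdots)$. And the remaining integral is $O(1)$ in $h$, not $O(h^{dp/2})$: by Minkowski (or Jensen against $K^2(u)\,du/\|K\|_2^2$),
$\int\Big(\int K^2(u)\,f(x-uh)\,du\Big)^{p/2}dx \;\le\; \big(\|K\|_2^2\,\|f\|_{p/2}\big)^{p/2},$
since $\int f(x-uh)^{p/2}\,dx=\int f(y)^{p/2}\,dy$ is translation-invariant --- there is no rescaling and hence no $h$-dependence here. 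The two errors cancel and your conclusion $O((nh^d)^{-p/2})$ is correct, but as written the intermediate claim about the inner integral being $O(h^{dp/2})$ is false.

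Two further observations. First, you are right that this argument needs $\|f\|_{p/2}<\infty$ (equivalently $\|f\|_\infty<\infty$ when $p\geq 2$, since $\int f^{p/2}\leq \|f\|_\infty^{p/2-1}$), which is a genuine (if mild) strengthening of the lemma's stated hypothesis ``any probability density $f$''; the paper inherits this looseness from the omitted reference. Second, your worry about $1\le p<2$ is more serious than your proposed fix handles: for $q=p/2<1$, membership of $g=K^2*_h f$ in $L^1\cap L^\infty$ does not imply $g\in L^q$ (heavy tails can still give $\int g^q=\infty$), so boundedness of $f$ alone does not close that case. That said, the paper's main theorems only invoke this lemma for $p\in[2,\infty)$, so the gap is in the lemma's stated range rather than in anything the paper actually uses.
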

\begin{proof}
  The proof  is an easy generalization of Proposition 1 in
  \citep{Masaon2009} and is omitted. For intuition, we simply present
  the case of $p=2$.
  \begin{align*}
    \int |\sigma(x)|^2 dx 
    & \leq \frac{1}{nh^{2d}} \E\left[
      K^2\left(\frac{X_i-x}{h}\right) \right]\\
    & = \frac{1}{nh^{2d}} \int \left[ \int f(z)
      K^2\left(\frac{z-x}{h}\right)dz \right]
      dx\\
    & = \frac{1}{nh^{2d}} \int f(z) \left[ \int 
                            K^2\left(\frac{z-x}{h}\right)dx \right]
      dz\\
    &= \frac{1}{nh^{d}} \int K^2(u)du
    = O\left(1/nh^d\right).
  \end{align*}
\end{proof}

With these results in hand, we can now prove \autoref{thm:upperBound}. 

\begin{proof}[Proof of \autoref{thm:upperBound}]
  Applying \autoref{lem:bias} and \autoref{lem:var} gives
  \[
  \sup_{f\in\nikol} \E\left[\norm{\hat{f}_h-f}_p\right] = O\left(dh^\beta\right)
  + O\left(\left(\frac{1}{nh^d}\right)^{1/2}\right).
  \]
  Taking 
  $
  h = A (d^2n)^{-\frac{1}{2\beta+d}}
  $
  balances the terms and gives the result.
\end{proof}

\section{Discussion}
\label{sec:discussion}

In this paper we have developed the first results for density
estimation under triangular array asymptotics, where both the number of
observations $n$ and the ambient dimension $d$ are allowed to
increase. Our results generalize existing, fixed-$d$ minimax results,
in that, were $d$ fixed rather than increasing, we would recover
previously known minimax rates (both lower and upper bounds). Our
results also show that kernel density estimators are minimax
optimal, which should come as no surprise, since they are minimax
optimal for fixed $d$. 

The results presented in this paper say essentially that, for $n$
large enough 
there exist
constants $0<a<A<\infty$ independent of $d,n$ such that for $n$ large
enough, 
\begin{align*}
  a \left(\frac{d^d}{n^\beta}\right)^{\frac{1}{2\beta+d}} 
  \leq \inf_{\hat{f}}\sup_{f\in\nikol} \E\left[ \norm{\hat{f}-f}_p
    \right]
  &\leq \sup_{f\in\nikol} \E\left[ \norm{\hat{f}_h-f}_p
    \right]
  \leq A \left(\frac{d^d}{n^\beta}\right)^{\frac{1}{2\beta+d}},
\end{align*}
for $p\in[2,\infty)$ when $\hat{f}_h$ is the kernel density estimator
with oracle $h$. This result generalizes immediately to a result
for $\E\left[ \norm{\hat{f}-f}_p^p\right]$. With longer proofs, we can
generalize this result to $\E\left[ \norm{\hat{f}-f}_p^s\right]$ for
some $s\neq p$ and to the case $p\in[1,2)$. Another extension is to
the case $p=\infty$ which picks up a factor of $\log n$ in the
numerator of the rate.

With the same techniques used here, we could also give results for
nonparametric regression under triangular array asymptotics. Given
pairs $(y_i,x_i)$, kernel
regression $g(x)$ can be written in terms of densities as
$
  g(x) = \E[Y\given X=x] = \int y f(x,y) dy / f(x)
$
 for joint and marginal densities $f(x,y)$ and $f(x)$ respectively. So
 results for the Nadaraya-Watson kernel estimator
\begin{align*}
  \hat{g}_h(x) = \frac{\sum_{i=1}^n y_i K((x-x_i)/h)}{\sum_{i=1}^n K((x-x_i)/h)}
\end{align*}
can be obtained with similar proof techniques to those presented here.

A related extension would consider the problem of conditional density
estimation directly. Using a similar form,
\begin{align*}
  \hat{q}_h(x,y) = \frac{\sum_{i=1}^n K_1((y_i-y)/h) K_2((x-x_i)/h)}{\sum_{i=1}^n K_2((x-x_i)/h)}
\end{align*}
estimates the conditional density $q(Y|X)$. If $X\in \R^d$, this
estimator has been shown to converge at a rate of
$O(n^{-\beta/(2\beta+1+d)})$ under appropriate smoothness
assumptions~\citep[see, e.g.][]{HallRacine2004}.

Our results also suggest some open questions. Wavelet density
estimators and projection estimators are known to be rate-minimax for
$d$ fixed  in that upper bounds match those of kernels in $n$, though
constants may be larger or smaller. Whether these methods also match
for increasing $d$ remains to be seen (the class of densities examined
is usually slightly different). Histograms are also useful
density estimators, and for fixed $d$, they are minimax over Lipschitz
densities with a slower rate than that for kernels, again because the
class of allowable densities is different. Upper bounds under the
triangular array with a similar form to
those presented here were shown
in~\citep{McDonaldShalizi2011a,McDonaldShalizi2015}, but deriving
minimax lower 
bounds for this class remains an open problem.
Extending our results to the manifold setting (as mentioned in 
\S\ref{sec:introduction}) is the most obvious path toward fast rates
for large $d$ and is left as future work.

\subsubsection*{Acknowledgements}

This material is based upon work supported by the National Science
Foundation under Grant No.~DMS--1407439 and the Institute for New
Economic Thinking under Grant No.~INO14--00020. The author thanks the
anonymous referees and the program committee for the $20^{th}$
International Conference on Artificial Intelligence and Statistics for their insightful
comments and Cosma Shalizi for comments on an early draft.

\bibliography{AllReferences}
\end{document}